\def\vector#1{{\mathbf {#1}}}
\def\star#1{{\rm star}(#1)}
\def\type#1{{\rm type}(#1)}
\def\lb{{\rm lb}}
\def\smtyp{{\hbox{\rm\tiny
    Types}(m)}}
\def\numcr{{\times}}
\newcommand{\Cr}{{\hbox{\rm cr}}}
\newcommand{\Crt}{{{\nu}_2}} 
\newcommand{\dd}{{\cal D}}
\newcommand\floor[1]{{\lfloor{#1}\rfloor}}
\newcommand{\smallfloor}[1]{{\bigl\lfloor{#1}\bigr\rfloor}}
\newcommand{\Diag}{\mbox{Diag}}
\newcommand{\trace}{\mathrm{trace}}
\newcommand{\beq}{\begin{equation}}
\newcommand{\eeq}{\end{equation}}
\newcommand{\beann}{\begin{eqnarray*}}
\newcommand{\eeann}{\end{eqnarray*}}
\newcommand{\bc}{\begin{center}}
\newcommand{\ec}{\end{center}}
\newtheorem{theorem}{Theorem}
\newtheorem{lemma}[theorem]{Lemma}
\newtheorem{claim}[theorem]{Claim}
\newtheorem{proposition}[theorem]{Proposition}
\title{Improved lower bounds for the 2-page crossing numbers \\ of $K_{m,n}$
  and $K_{n}$ via semidefinite programming}
\author{E.~de Klerk\thanks{Department of Econometrics and OR, Tilburg University, The Netherlands.}
  \and
D.V.~Pasechnik\thanks{School of Physical \& Mathematical Sciences, Nanyang Technological University,
             Singapore.}
}
\begin{document}
\maketitle
\vspace{-1cm}
\begin{abstract}
It has been long conjectured that the
crossing numbers of the complete bipartite graph $K_{m,n}$ and of the
complete graph $K_n$ equal
$Z(m,n):=\smallfloor{\frac{n}{2}}
\smallfloor{\frac{n-1}{2}}
\smallfloor{\frac{m}{2}}\smallfloor{\frac{m-1}{2}}$ and
$Z(n):=\frac{1}{4}\smallfloor{\frac{n}{2}}
\smallfloor{\frac{n-1}{2}}
\smallfloor{\frac{n-2}{2}}\smallfloor{\frac{n-3}{2}}$, respectively.
In a $2$-{\em page drawing} of a graph, the vertices are drawn on a
straight line (the {\em spine}), and each edge is contained in one of
the half-planes of the spine.
The $2$-{\em page crossing number} $\Crt(G)$ of a
graph $G$ is the minimum number of crossings in a $2$-page drawing of
$G$. 
Somewhat
surprisingly, there are $2$-page drawings of $K_{m,n}$ (respectively,
$K_n$) with exactly $Z(m,n)$ (respectively, $Z(n)$)
crossings, thus yielding the conjectures
(I) $\Crt(K_{m,n}) \stackrel{?}{=} Z(m,n)$ and (II)
$\Crt(K_n) \stackrel{?}{=} Z(n)$.
It is known that (I) holds
for $\min\{m,n\} \le 6$, and that (II) holds for $n \le 14$.
In this paper we prove that (I) holds asymptotically (that is,
$\lim_{n\to\infty} \Crt(K_{m,n})/Z(m,n) =1$) for $m=7$ and
$8$. We also
prove (II)
for $15 \le n \le 18$ and $n \in \{20,24\}$, and establish the asymptotic
estimate
\[
\lim_{n\to\infty}
\Crt(K_{n})/Z(n) \ge 0.9253.
\]
The previous best-known lower bound
involved the constant $0.8594$.

%

\end{abstract}
{\bf Keywords:} $2$-page crossing number, book crossing number,
semidefinite pro\-gram\-ming, maximum cut, Goemans-Williamson max-cut bound

{\bf AMS Subject Classification:} 90C22, 90C25, 05C10, 05C62, 57M15, 68R10

\section{Introduction}

 We recall that the {\em crossing number} $\Cr(G)$ of a graph $G$ is the minimum number of
pairwise intersections of edges (at a point other than a vertex)  in a drawing
of $G$ in the plane. Besides their natural interest in topological
graph theory, crossing number problems are of interest because of
their applications, most notably in VLSI design~\cite{leightonvlsi}.


Also motivated by applications to VLSI design, Chung, Leighton and
Rosenberg~\cite{leighton} studied embeddings of graphs in {\em books}:
the vertices are placed along a line (the {\em spine}) and the edges
are placed in the {\em pages} of the book.
In a {\em book drawing} (equivalently, \emph{$k$-page drawing}, if the book
has $k$ pages), crossings among edges are allowed. The
$k$-{\em page crossing number} $\nu_k(G)$ of a graph $G$ is the
minimum number of crossings of edges in a $k$-page drawing of $G$.

Clearly, a graph $G$ has $\nu_1(G) = 0$ if and only if it is
outerplanar.
Closely related to $1$-page drawings are {\em circular
  drawings}, in which the vertices are placed on a circle and
all edges are drawn in its interior.
It is easy to see the one-to-one
correspondence between $1$-page drawings and circular drawings.

In a
similar vein, $2$-page drawings can be alternatively modelled by
drawing the vertices of the graph on a circle, and imposing the
condition that every edge lies either in the interior or in
the exterior of the circle (see Figure~\ref{fig:2pagemodels}).  In this paper we shall often use this
equivalent {\em circular model} for
$2$-page drawings, as well as the usual {\em spine model}.
It is known that the family of graphs $G$ with $\nu_2(G) = 0$ is precisely the 
family of subgraphs of Hamiltonian planar graphs \cite{Bernhart-Kainen}. As a consequence,
there exist planar graphs $G$ with $\nu_2(G) > 0$, in contrast to the case of the normal crossing number.
In fact, it was shown that all planar graphs may be embedded without crossings in 4-page books, and that 
four pages are necessary \cite{Yannanakis}.

\begin{figure}[ht]
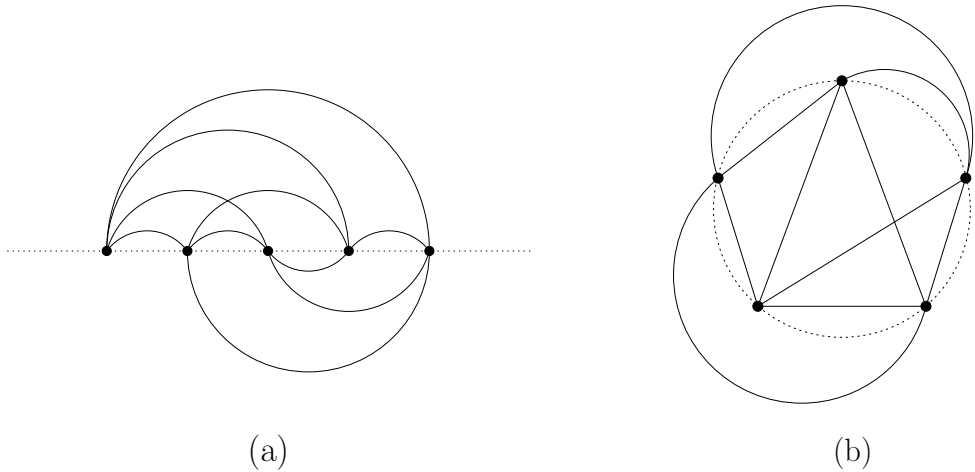

\label{fig:2pagemodels}
\begin{minipage}[b]{0.5\linewidth}
\centering
\resizebox{7cm}{!}{\input{k5-01.pspdftex}}
\end{minipage}
\hspace{-0.5cm}
\begin{minipage}[b]{0.5\linewidth}
\centering
\resizebox{4cm}{!}{\input{k5-02.pspdftex}}
\end{minipage}
\caption{A $2$-page drawing of $K_5$: (a) in the spine model;
  and (b) in the circular model.}
\end{figure}

Masuda et al.~\cite{masuda0,masuda} proved that the decision problems
for $\nu_1$ and $\nu_2$ are NP-complete.
Shahrokhi et al.~\cite{sssv} gave an approximation algorithm for
$\nu_k(G)$, as well as applications to the rectilinear crossing
number. A more recent, additional motivation for studying $k$-page
crossing numbers comes from Quantum Dot Cellular Automata~\cite{tl}.

Several interesting algorithms and heuristics have been proposed for producing 1- and
2-page drawings (see for instance \cite{cimi,cimi2,he2,he3,he4,he5}). As with the
usual crossing number, the exact computation of $\nu_k(G)$ (for any
integer $k$) is a very challenging problem, even for restricted
families of graphs.  In this direction, Fulek, He, S\'ykora, and
Vrt'o~\cite{fulek},
He, S\v{a}l\v{a}gean, and
M\"akinen~\cite{he1}, and Riskin~\cite{riskin} have computed the exact $1$-page and $2$-page
crossing numbers of several interesting families of graphs.


\subsection{Drawings of $K_{m,n}$ and $K_n$}

Tur\'an asked in the 1940's: what is the crossing number of the
complete bipartite graph $K_{m,n}$?
There is a natural drawing of $K_{m,n}$ with exactly $Z(m,n):=\smallfloor{\frac{n}{2}}
\smallfloor{\frac{n-1}{2}}
\smallfloor{\frac{m}{2}}\smallfloor{\frac{m-1}{2}}$ crossings (see Figure~\ref{fig:kmn}), and so
$\Cr(K_{m,n}) \le Z(m,n)$.

\begin{figure}[h!]
\begin{center}
\includegraphics[width=6cm]{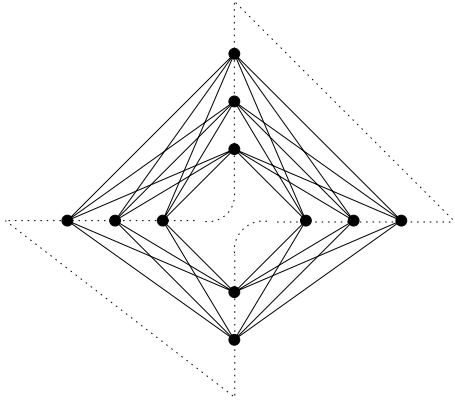}
\caption{A drawing of $K_{5,6}$ with  $Z(5,6)=24$ crossings. By
  performing a homeomorphism from the plane to itself that takes the
  dotted curve to a straight line, the result is a $2$-page drawing
  of $K_{5,6}$ with the same number of crossings.}
\label{fig:kmn}
\end{center}
\end{figure}

Perhaps the foremost open crossing number problem is {\em
 Zarankiewicz's Conjecture}, dating back to the early 1950's~\cite{zaran}:
\begin{equation}\label{eq:zarank}
\Cr(K_{m,n}) \stackrel{?}{=}
Z(m,n).
\end{equation}

This conjecture has been verified only for $\min\{m,n\} \le 6$~\cite{kleitman}, and
for the special cases $(m,n) \in \{(7,7), (7,8), (7,9), (7,10), (8,8), (8,9),(8,10)\}$~\cite{woodall}.

On a parallel front, there are drawings of the complete graph $K_n$ with exactly $Z(n):= \frac{1}{4}\smallfloor{\frac{n}{2}}
\smallfloor{\frac{n-1}{2}}
\smallfloor{\frac{n-2}{2}}\smallfloor{\frac{n-3}{2}}$ crossings (for
every $n$), and
so $\Cr(K_n) \le Z(n)$.
These drawings inspired the still open, long-standing Harary-Hill Conjecture~\cite{hararyhill}:
\begin{equation}\label{eq:conj1}
\Cr(K_n) \stackrel{?}{=} Z(n).
\end{equation}
This conjecture has been
verified for $n \le 12$~\cite{panrichter}.

For a detailed account on the history of (\ref{eq:zarank}) and (\ref{eq:conj1}),
we refer the reader to the lively survey by Beineke
and Wilson~\cite{bw}.

\subsection{$2$-page drawings of $K_{m,n}$ and $K_n$}

The drawing in Figure~\ref{fig:kmn} is easily generalized to yield a
drawing of $K_{m,n}$ with $Z(m,n)$ crossings. As mentioned in the
caption of this figure, such a drawing is easily transformed into a
$2$-page drawing of $K_{m,n}$ with the same number of crossings.
Thus, there exist $2$-page drawings of $K_{m,n}$ with
$Z(m,n)$ crossings.

On the other hand, it is somewhat surprising that
there exist $2$-page drawings of $K_n$
 with exactly $Z(n)$ crossings, for every positive integer $n$ (\cite{guy2}; see
also~\cite{Harborth}).

These observations imply that $\Crt(K_{m,n}) \le Z(m,n)$ and
$\Crt(K_n) \le Z(n)$. Since obviously $\Cr(G) \le \Crt(G)$ for every
graph $G$, (\ref{eq:zarank}) and (\ref{eq:conj1})
immediately imply the following conjectures:
\begin{equation}\label{eq:conj2pkmn}
\Crt(K_{m,n}) \stackrel{?}{=} Z(m,n).
\end{equation}
\begin{equation}\label{eq:conj2p}
\Crt(K_n) \stackrel{?}{=} Z(n).
\end{equation}

Even though (\ref{eq:conj2pkmn}) and (\ref{eq:conj2p}) are (at least in
principle) weaker than the corresponding (\ref{eq:zarank}) and
(\ref{eq:conj1}), and
even though the $2$-page crossing number problem can be naturally formulated in purely combinatorial
terms, our current knowledge (prior to this paper) on
(\ref{eq:conj2pkmn}) and (\ref{eq:conj2p})
is not substantially better than our knowledge on
(\ref{eq:zarank}) and (\ref{eq:conj1}). Indeed, the only step ahead is the proof by Buchheim
and Zheng~\cite{Buchheim-Zheng} that $\Crt(K_{13}) = Z(13)$ (from which a routine counting
argument yields that $\Crt(K_{14}) = Z(14)$). The best general lower bounds known
for $\Crt(K_{m,n})$ and $\Crt(K_n)$ are the same as those known for
$\Cr(K_{m,n})$ and $\Cr(K_n)$, and the same
is true for the asymptotic ratio $\lim_{n\to\infty} \Crt(K_n)/Z(n)$, whose
best current estimate is exactly the same as the asymptotic ratio
$\lim_{n\to\infty} \Cr(K_n)/Z(n)$, namely $0.859$~\cite{DeKPasSch}.

\subsection{Main results}
Our main results in this paper
offer a substantial improvement on our knowledge of $\Crt(K_{m,n})$
and $\Crt(K_n)$ over our knowledge of $\Cr(K_{m,n})$ and $\Cr(K_n)$.

\renewcommand{\theequation}{{\rm A}}

\begin{theorem}\label{thm:main2}
The $2$-page Harary-Hill Conjecture holds for all $m \le 18$ and for
$m= 20$ and $24$:
\begin{equation}\label{eq:main1}
\Crt(K_{m}) = Z(m) \ \hbox{ for all }\ m \le 18 \ \hbox{ and for }\ m \in \{20,24\}.
\end{equation}
\addtocounter{equation}{-1}
 \renewcommand{\theequation}{{\rm B}}
\noindent Moreover, the asymptotic ratio between the $2$-page crossing number of
$K_n$ and its conjectured value satisfies:
\begin{equation}\label{eq:main2}
\lim_{n\to\infty} \frac{\Crt(K_{n})}{Z(n)} \ge 0.9253.
\end{equation}
\end{theorem}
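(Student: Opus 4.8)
The plan is to attack both parts of the theorem via a semidefinite programming relaxation of the $2$-page crossing number, exploiting the circular model. Fix the cyclic order of the $n$ vertices on the circle; by vertex-transitivity of $K_n$ we may assume this order is $1,2,\dots,n$ around the circle. With the cyclic order fixed, a $2$-page drawing is determined by a partition of the edge set into ``inside'' and ``outside'' edges, i.e.\ by a $\pm 1$ assignment $x_e$ to each edge $e$. Two edges $e,f$ whose endpoints interleave on the circle cross if and only if they lie on the same page, so the number of crossings is $\sum_{e \sim f} \tfrac{1}{2}(1 + x_e x_f)$, where $e \sim f$ means the endpoints interleave. Minimizing over all cyclic orders, $\Crt(K_n) = \min_{\text{orders}} \operatorname{maxcut-complement}$ of an associated ``conflict'' graph $H_n$ on the edge set of $K_n$. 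This casts the problem as a minimum over a finite family of (complements of) max-cut instances, to which the Goemans--Williamson semidefinite bound applies.

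The key steps, in order, are as follows. First I would set up the SDP: relax each $x_e x_f$ to an inner product $\langle v_e, v_f\rangle$ with $\|v_e\| = 1$, obtaining a lower bound $\mathrm{SDP}(H_n)$ on $\tfrac{1}{2}\sum_{e\sim f}(1 + x_e x_f)$ for the fixed order, hence on $\Crt(K_n)$ after we argue the order is irrelevant. Second, and crucially, I would use symmetry: the cyclic group (and the dihedral group, together with the edge-length structure) acts on $H_n$, so by the standard symmetry-reduction for SDPs the optimal solution can be taken invariant, collapsing the SDP to one of size polynomial in $n$ (indexed by edge ``types'', essentially the pair of edge-lengths and their relative rotation). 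This is what makes the computation feasible for $n$ up to $24$ and, more importantly, lets one analyze the family uniformly. Third, for the exact results in \eqref{eq:main1}: for each $n \in \{15,\dots,18,20,24\}$ solve the reduced SDP (rationally/rigorously, e.g.\ by exhibiting a dual feasible solution certifying the bound) and check that $\mathrm{SDP}(H_n) \ge Z(n)$; combined with the known drawing giving $\Crt(K_n) \le Z(n)$, equality follows. The cases $n \le 14$ are already known, so this completes \eqref{eq:main1}. Fourth, for the asymptotic bound \eqref{eq:main2}: one passes from $\Crt(K_{m,n})$ (or a convenient finite value of $n$ whose SDP we can solve) to $\Crt(K_n)$ via the standard counting/averaging argument — every $n$-vertex drawing induces $\binom{n}{m}$ sub-drawings on $m$ vertices, so $\Crt(K_n) \ge \binom{n}{m}\Crt(K_m)/\binom{n-... }{...}$-type inequality — and then let the reference size grow, reading off the limiting constant $0.9253$ from the value of the SDP bound divided by $Z$ at the best reference case, using monotonicity of the ratio.

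The main obstacle I expect is making the SDP bound both \emph{large enough} and \emph{rigorous}. Numerically solving a large SDP gives only a floating-point value; to conclude an exact equality $\Crt(K_n) = Z(n)$ one must produce an exact (rational) dual certificate, which typically requires guessing the right analytic form of the optimal dual matrix from the symmetry-reduced structure and then verifying positive semidefiniteness by hand or by exact arithmetic — this guessing step is the delicate part. For the asymptotic constant, the obstacle is squeezing the best possible value out of the largest reference case one can actually certify: the bound $0.9253$ is the ratio $\mathrm{SDP}/Z$ at that case, and pushing it higher would need either a larger solvable instance or a structural improvement of the relaxation, so one must argue carefully that the chosen reference case is both tractable and (via the counting argument) yields a valid lower bound for all larger $n$.
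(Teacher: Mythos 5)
Your reduction of $\Crt(K_n)$ to a max-cut problem on the conflict graph of interleaving chords (with the cyclic order fixed by vertex-transitivity of $K_n$), and your strategy for part (B) --- symmetry-reduce the Goemans--Williamson SDP via the dihedral action, certify a lower bound on $\Crt(K_m)$ for one large reference value $m$, and transfer it to the limit by the counting argument $\Crt(K_n)\ge \binom{n}{m}\Crt(K_m)\big/\binom{n-4}{m-4}$ --- coincide with the paper's route (Lemma~\ref{lem:reform}, Lemma~\ref{lemma:reformulation SDP}, Claim~\ref{cla:claimA}, with reference case $m=899$ yielding the constant $0.9253$).

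The genuine gap is in your plan for part (A). You propose to certify $\Crt(K_n)=Z(n)$ for $n=15,\dots,18,20,24$ by exhibiting an exact dual feasible solution of the \emph{SDP relaxation}. That cannot work, because $\mathcal{GW}(G_n)$ strictly exceeds $\mathrm{maxcut}(G_n)$ for these circle graphs, and by far too much: the paper's own computations show that the SDP-based lower bound $\binom{n}{4}-\mathcal{GW}(G_n)$ reaches only about $0.92\,Z(n)$ to $0.93\,Z(n)$ (this is precisely why the asymptotic constant in (B) is $0.9253$ rather than $1$). Concretely, for $n=15$ one would need $\mathcal{GW}(G_{15})\le 924=\mathrm{maxcut}(G_{15})$, whereas the relaxation gap of several percent puts the SDP value well above that, giving a lower bound on $\Crt(K_{15})$ strictly below $Z(15)=441$. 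No dual certificate for the relaxation, however exact or cleverly guessed, can close an integrality gap. What part (A) actually requires --- and what the paper does --- is an \emph{exact} solution of the max-cut problem on $G_n$ by branch and bound (the {\tt BiqMac} solver, which combines SDP bounds with polyhedral cuts and branching), for the odd values $n=15,17$ and for $n=20,24$; the remaining even cases then follow from the odd ones by the standard counting argument. So you must replace ``solve the relaxation and certify it dually'' by ``solve the combinatorial max-cut instance to provable optimality,'' which is an enumeration task, not an SDP certificate.
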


\begin{theorem}
The $2$-page Zarankiewicz's Conjecture holds in the asymptotically
relevant term for $m=7$
and $8$. That
is:
\label{thm:main1}
\begin{align*}
\nonumber
\Crt(K_{7,n}) &= (9/{4})n^2 + O(n) = Z(7,n) + O(n),\ \hbox{\rm and} \\
\Crt(K_{8,n}) &= 3n^2 + O(n) = Z(8,n) + O(n).
\end{align*}
Therefore,
\begin{equation*}
\lim_{n\to\infty} \frac{\Cr(K_{7,n})}{Z(7,n)} = 1 \hbox{\hglue 0.5 cm
  \rm and \hglue 0.5 cm} \lim_{n\to\infty} \frac{\Cr(K_{8,n})}{Z(8,n)} = 1.
\end{equation*}
\end{theorem}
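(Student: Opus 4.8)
The inequalities $\Cr(K_{7,n})\le\Crt(K_{7,n})\le Z(7,n)=9\smallfloor{n/2}\smallfloor{(n-1)/2}=\tfrac94n^{2}+O(n)$ and $\Cr(K_{8,n})\le\Crt(K_{8,n})\le Z(8,n)=12\smallfloor{n/2}\smallfloor{(n-1)/2}=3n^{2}+O(n)$ are already in hand, so what remains are the matching lower bounds. Moreover the case $m=8$ reduces to $m=7$ by a counting argument: in any drawing $D$ of $K_{8,n}$ (or $2$-page drawing), deleting a vertex $w$ of the size-$8$ side leaves a drawing of $K_{8,n}-w\cong K_{7,n}$, hence with at least $\Cr(K_{7,n})$ (resp.\ $\Crt(K_{7,n})$) crossings; since a crossing of $D$ uses two edges with no common endpoint and therefore, by bipartiteness, two distinct vertices of the size-$8$ side, it survives exactly $8-2=6$ such deletions. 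Summing over $w$ gives $6\,\Cr(K_{8,n})\ge8\,\Cr(K_{7,n})$ (and likewise for $\Crt$), so a bound $\Cr(K_{7,n})\ge\tfrac94n^{2}-O(n)$ yields $\Cr(K_{8,n})\ge3n^{2}-O(n)$, and similarly for $\Crt$. Everything therefore comes down to the lower bound for $K_{7,n}$.

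To lower-bound $\Crt(K_{7,n})$ I would follow the semidefinite-programming strategy developed for the crossing numbers of $K_{m,n}$ and $K_{n}$ \cite{DeKPasSch}, in the version adapted to the $2$-page / circular model. The first step is a combinatorial reduction to a quadratic optimisation problem: to each of the $n$ vertices $v$ of the size-$n$ side one assigns a \emph{type} $t(v)$ from a finite set ${\cal T}$ that encodes the local $2$-page structure at $v$ --- the arrangement of the seven small vertices around $v$ together with the page assignment of the seven edges at $v$ --- in such a way that, for any two large-side vertices $u,v$, the number of crossings in $D$ between $\star{u}$ and $\star{v}$, which equals the number of crossings of the $2$-page drawing of $K_{7,2}$ that $D$ induces on $\{u,v\}$ and the seven small vertices, is at least an entry $Q\bigl(t(u),t(v)\bigr)$ of an explicitly computable symmetric nonnegative matrix $Q$ on ${\cal T}$. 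Since every crossing of $K_{7,n}$ likewise uses two distinct large-side vertices, summing this bound over the $\binom n2$ pairs $\{u,v\}$, grouping the large-side vertices by type, and retaining only the term of order $n^{2}$ gives
\[
\Crt(K_{7,n})\ \ge\ \tfrac{n^{2}}{2}\,\min_{x\in\Delta}\,x^{\!\top}Q\,x\ -\ O(n),
\]
with $\Delta\subset\LR^{{\cal T}}$ the standard simplex. It therefore suffices to prove $\min_{x\in\Delta}x^{\!\top}Q\,x\ge\tfrac92$.

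Computing this minimum exactly is NP-hard in general, so I would bound it below by the value of a semidefinite (copositive-type, i.e.\ Goemans--Williamson max-cut-style) relaxation --- for instance $\min\{\langle Q,X\rangle:\ X\succeq 0,\ X\ge0\ \text{entrywise},\ \langle J,X\rangle=1\}$ --- and exploit symmetry. The matrix $Q$ is invariant under the action of $S_{7}$ permuting the labels of the seven small vertices and under the reflection interchanging the two pages; by convexity the relaxation then has an optimal solution invariant under this group, and the regular $*$-representation / isotypic block decomposition collapses it to a small semidefinite program. Solving the reduced program numerically and certifying the bound with an exact rational dual-feasible solution of value $\tfrac92$ --- equivalently, exhibiting $Q-\tfrac92J$ as an explicit member of the relevant dual cone --- completes the bound for $m=7$, and hence, via the counting step, for $m=8$; combined with the upper bounds recalled at the outset this gives the two $\Crt$-equalities. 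The corresponding limits for $\Cr(K_{7,n})$ and $\Cr(K_{8,n})$ follow from $\Cr\le\Crt$ (which forces the limits to be $\le1$) together with the matching asymptotic lower bounds for $\Cr$, obtained by running the same machinery on the (coarser) rotation-system types available for arbitrary drawings and propagating via the counting reduction.

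The two genuinely hard parts are the combinatorial reduction and the rigour of the numerics. Identifying the correct type set ${\cal T}$ and proving the pairwise lower bound $Q$ requires a finite but intricate analysis of the $2$-page drawings of $K_{7,2}$ (and, for the $\Cr$ version, of arbitrary drawings of $K_{7,2}$ via a Kleitman-type parity argument), where one must keep careful track of the fact that two edges cross only when they lie on the \emph{same} page, not merely when they link as chords of the circle. And making the semidefinite argument rigorous is delicate: before the symmetry reduction the relaxation is far too large to solve directly, so the block diagonalisation is indispensable, and the floating-point optimum must finally be turned into an exact rational certificate that the relaxation value --- and hence $\min_{x\in\Delta}x^{\!\top}Q\,x$ --- is at least $\tfrac92$.
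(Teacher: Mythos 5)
Your overall route is the same as the paper's: reduce $m=8$ to $m=7$ by the deletion/counting argument ($6\,\Crt(K_{8,n})\ge 8\,\Crt(K_{7,n})$, exactly as in the paper), encode the local data of each degree-$7$ red vertex as a \emph{type}, build a matrix $\vector{Q}$ of pairwise forced crossings between stars, pass to the standard quadratic program $\min_{\vector{x}\in\Delta}\vector{x}^T\vector{Q}\vector{x}$, relax it to the SDP $\min\{\trace(\vector{Q}\vector{X}):\vector{X}\succeq\vector{0},\ \vector{X}\ge\vector{0},\ \trace(\vector{J}\vector{X})=1\}$, and verify numerically that its value is $9/2$. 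There is, however, one genuine error: you assert that $\vector{Q}$ is invariant under $S_7$ permuting the labels of the seven degree-$n$ vertices (together with the page flip) and propose to symmetry-reduce with respect to that group. In the $2$-page model the seven vertices $b_1,\ldots,b_7$ sit in a fixed order on the spine; a type records the \emph{position} of the red vertex among them together with the page of each incident edge, and the entries $Q_{\sigma\tau}$ are computed from alternation conditions that refer to this order. An arbitrary relabelling of the $b_i$ therefore does not preserve $\vector{Q}$. The correct symmetry group is the abelian group of order $2m$ generated by the page flip and the cyclic shift of the spine order (cyclic of order $14$ for $m=7$), which is what produces the $2m\times 2m$ circulant block structure the paper exploits. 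This is not cosmetic: restricting the SDP to $G$-invariant $\vector{X}$ for a group $G$ that does not fix $\vector{Q}$ amounts to replacing $\vector{Q}$ by $\bar{\vector{Q}}=\frac{1}{|G|}\sum_{g}P_g^T\vector{Q}P_g$, and $\min_{\vector{x}\in\Delta}\vector{x}^T\bar{\vector{Q}}\vector{x}$ can strictly exceed $\min_{\vector{x}\in\Delta}\vector{x}^T\vector{Q}\vector{x}$, so the computed number would not be a valid lower bound on $\Crt(K_{7,n})$. (You appear to be importing the $S_m$ symmetry from the rotation-system argument for the ordinary crossing number, where it is legitimate.)

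A second caution concerns the final displayed limits. The argument (yours and the paper's) produces a lower bound only on $\Crt(K_{7,n})$, and $\Cr\le\Crt$ points the wrong way for deducing anything about $\Cr$ from below; the intended conclusion, as the abstract makes clear, is $\lim_{n\to\infty}\Crt(K_{7,n})/Z(7,n)=1$ (and likewise for $m=8$), the occurrence of $\Cr$ in the theorem statement being a typo. Your proposed repair --- rerunning the machinery on rotation-system types for arbitrary drawings to get a matching lower bound on $\Cr(K_{7,n})$ --- is known not to work: the paper's concluding remarks point out that the analogous SDP bound of \cite{dKMahPasRicSal} is not asymptotically tight even for $\Cr(K_{5,n})$. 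You should therefore state the conclusion for $\Crt$ only. Apart from these two points (and your extra, commendable, insistence on an exact rational dual certificate, which the paper does not supply), the proposal matches the paper's proof.
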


\addtocounter{equation}{-1}

\renewcommand{\theequation}{\arabic{equation}}

%


\subsubsection*{Outline of this paper}
The rest of this paper is structured as follows. In
Section~\ref{sec:maxcutformulation}, we review the
reformulation (first
unveiled by Buchheim and Zheng~\cite{Buchheim-Zheng}) in which the
problem of calculating $\Crt(K_n)$ is shown to be equivalent to a
maximum cut problem on an associated graph $G_n$.  In Section 3 we
invoke a result by Goemans and Williamson that provides an upper bound
on the size of the maximum cut of a graph; this bound may be computed
via semidefinite programming. Using these ingredients, in Section~\ref{sec:numerical} we
present the numerical computations that establish
Theorem~\ref{thm:main2}.  In Section~\ref{sec:kmnone} we formulate a
quadratic program whose solution yields a lower bound on
$\Crt(K_{m,n})$. In Section~\ref{sec:kmntwo} we analyze the
semidefinite programming relaxation of this quadratic program, and in
Section~\ref{sec:numericalzar} we give the numerical computations that
prove Theorem~\ref{thm:main1}. In
Section~\ref{sec:concludingremarks} we present some concluding
remarks.

\section{Formulating $\Crt(K_{n})$ as a maximum cut
  problem}\label{sec:maxcutformulation}

Buchheim and Zheng \cite{Buchheim-Zheng} unveiled a natural reformulation of
the fixed linear crossing number problem (FLCNP) as a maximum cut
problem. Their results imply, in particular, that $\Crt(K_{n})$ can be obtained by computing the maximum cut size
in a certain graph $G_n=(V_n,E_n)$, with $V_n$ and $E_n$ defined as
follows.

Consider a Hamiltonian cycle with vertices $v_1, v_2, \ldots, v_n$. Let $V_n$
be the set of {\em chords} of the cycle, that is, the edges $v_iv_j$ with
$v_i$ and $v_j$ at cyclic distance at least $2$. Now to define $E_n$, let two chords $v_iv_j$ and
$v_kv_\ell$ be adjacent if they intersect. This construction is
illustrated in Figure \ref{fig:G5} for $n = 5$.

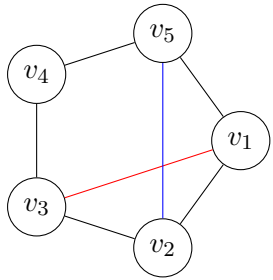
\begin{figure}[h!]
\begin{center}
\begin{tikzpicture}[scale=1.5]
\tikzstyle{every node}=[draw,shape=circle];
\path (0:1cm) node (v1) {$v_1$};
\path (72:1cm) node (v2) {$v_5$};
\path (2*72:1cm) node (v3) {$v_4$};
\path (3*72:1cm) node (v4) {$v_3$};
\path (4*72:1cm) node (v5) {$v_2$};
\draw (v1) -- (v2)
(v2) -- (v3)
(v4) -- (v3)
(v5) -- (v4)
(v1) -- (v5);
{\color{red}\draw (v1) -- (v4);}
{\color{blue}\draw (v2) -- (v5);}
\end{tikzpicture}
\caption{\label{fig:G5} The chords $v_1v_3$ and $v_2v_5$ form adjacent vertices in the graph $G_5$.}
\end{center}
\end{figure}

Thus $|V_n| = {n \choose 2} - n$, and it is easy to check that $|E_n| = {n \choose 4}$.
The automorphism group of $G_n$ is isomorphic to the dihedral group $D_n$, and there are $d-1$ orbits
of vertices, where $d = \floor{n/2}$. The equivalency classes of vertices (i.e.\ vertices belonging to the same orbit)
may be described as follows: since vertices correspond to chords in $P_n$, the chords that connect vertices of $P_n$
at the same cyclic distance belong to the same equivalency class.
The vertices corresponding to chords with cyclic distance $i$ have valency  $i(i-1) + 2(i-1)(d-i)$, as is easy to check.

Now for a graph $G=(V,E)$ and a subset $W \subset V$,
cut${}_{W} (G)$ denotes the number of edges with precisely one endpoint in
$W$, and maxcut$(G)$ is the maximum
value of cut${}_{W} (G)$ taken over all subsets $W \subset V$.

The next lemma follows immediately from Theorem 1 in~\cite{Buchheim-Zheng}.
We sketch the proof for the sake of completeness.

\begin{lemma}\label{lem:reform}
\[
\Crt(K_{n}) = |E_n| - \mbox{\rm maxcut}(G_n).
\]
\end{lemma}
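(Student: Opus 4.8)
The plan is to set up an explicit correspondence between (crossing‑minimal) $2$‑page drawings of $K_n$ and cuts of $G_n$. First I would fix, once and for all, the spine order of the vertices to be $v_1,\dots,v_n$. Since $K_n$ is vertex‑transitive — indeed its automorphism group is the full symmetric group $S_n$ — relabelling the vertices induces a bijection on the set of $2$‑page drawings that preserves the number of crossings; hence the minimum number of crossings is attained by a drawing in which the vertices occur along the spine (equivalently, along the circle in the circular model) in the order $v_1,\dots,v_n$. From now on only such drawings are considered.

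Next I would split the edges of $K_n$ into the $n$ Hamiltonian‑cycle edges $v_iv_{i+1}$ (indices mod $n$) and the $\binom n2 - n$ chords. An edge joining two cyclically consecutive vertices can be routed arbitrarily close to the spine, in either page, and so crosses no other edge; such edges may therefore be ignored, and a $2$‑page drawing is determined, up to the number of crossings, by an assignment of each chord to one of the two pages. Writing $W\subseteq V_n$ for the set of chords placed in the first page, the relevant classical fact about $1$‑page (circular) drawings is that two chords lying in a common half‑plane can be drawn without crossing exactly when their endpoints do not interleave around the cycle, and are forced to cross otherwise; interleaving is precisely adjacency in $G_n$. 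Consequently the number of crossings of the drawing equals the number of edges of $G_n$ with both endpoints in $W$ plus the number with both endpoints in $V_n\setminus W$, namely
\[
|E_n| - \mathrm{cut}_W(G_n).
\]

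Finally I would minimise over $W$:
\[
\Crt(K_n)=\min_{W\subseteq V_n}\bigl(|E_n|-\mathrm{cut}_W(G_n)\bigr)=|E_n|-\max_{W\subseteq V_n}\mathrm{cut}_W(G_n)=|E_n|-\mathrm{maxcut}(G_n),
\]
as claimed. The one place requiring care — the main, though entirely standard, obstacle — is the ``classical fact'' above: one must check both that any family of pairwise non‑interleaving chords can be realised simultaneously inside a single half‑plane without crossings (an outerplanarity/nested‑routing observation), and that two interleaving chords in the same half‑plane must cross at least once (a Jordan‑curve argument). Granting this, the correspondence is exact: no crossings are created or removed beyond those counted by $|E_n|-\mathrm{cut}_W(G_n)$, so the inequality holds in both directions and the equality follows.
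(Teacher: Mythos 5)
Your proposal is correct and follows essentially the same route as the paper, which only sketches the correspondence (page assignment $W$ of chords $\leftrightarrow$ cut of $G_n$, with non-cut edges of $G_n$ corresponding to forced crossings) and delegates the details to Buchheim and Zheng. Your write-up merely fills in the standard supporting facts — WLOG on the spine order, harmlessness of the Hamiltonian-cycle edges, and the two-directional interleaving argument — all of which are handled correctly.
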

\proof
Given a two page (circle) drawing of $K_n$, define $W \subset V_n$ as the
chords that are drawn inside the circle.
The edges of $E_n$ with precisely one endpoint in $W$ now 
correspond to edges of $K_n$ that do not cross in the drawing.
\qed


\vglue 0.4 cm

As a consequence of this lemma, one may calculate $\Crt(K_{n})$ for fixed
(in practice, sufficiently small) values of $n$ by solving a maximum cut problem.  This was done
by Buchheim and Zheng \cite{Buchheim-Zheng} for $n \le 13$, by solving
the maximum cut problem with a branch-and-bound algorithm (Bucheim and
Zheng applied the technique to many other graphs as well).  Using the
{\tt BiqMac} solver~\cite{BiqMac}, we have computed the exact value of $\Crt(K_n)$ for
$n\le 18$ and for $n \in \{20,24\}$ (statement (A) in Theorem~\ref{thm:main2}; see Section~\ref{sec:numerical}).


\section{The Goemans-Williamson max-cut bound}
\label{sec:GW}

We follow the standard practice to use
$\mathbb{R}^{p\times q}$
(respectively, $\mathbb{C}^{p\times q}$)
to denote the space of $p\times q$ matrices over $\mathbb{R}$
(respectively, $\mathbb{C}$).
For $\vector{A} \in \mathbb{R}^{p \times p}$, the notation
$\vector{A} \succeq 0$ means that $\vector{A}$ is symmetric positive semidefinite,
whereas for
$\vector{A} \in \mathbb{C}^{p\times p}$, it means that $\vector{A}$ is Hermitian positive semidefinite.

Let $G$ be a graph with $p$ vertices,
and let $\vector{L}$ be
its Laplacian matrix.
Goemans and Williamson \cite{goe95} introduced the following semidefinite programming-based upper bound
on $\mbox{maxcut}(G)$:
\begin{equation}\label{eq:gw}
\mbox{maxcut}(G) \le \mathcal{GW}(G) := \max \left\{
\frac{1}{4} \trace(\vector{L}\vector{X}) \; \biggl| \; \vector{X}
\succeq 0,\ X_{ii} = 1 \; (1 \le i \le p) \right\}.
\end{equation}

It was shown in \cite{goe95} that $0.878\mathcal{GW}(G) \le \mbox{maxcut}(G) \le \mathcal{GW}(G) $ holds for all graphs $G$.


The associated dual semidefinite program takes the form:
\begin{equation}
\mathcal{GW}(G) = \min_{\vector{w} \in \mathbb{R}^{p}} \left\{ \sum_i w_i \; \biggl| \; \Diag(\vector{w}) - \frac{1}{4}\vector{L} \succeq 0\right\},
\label{dual GW}
\end{equation}
where ${\rm Diag}$ is the operator that maps a $p$-vector to a $p\times p$ diagonal
matrix in the obvious way.

\subsection{The Goemans-Williamson bound for $G_n$}
Using the technique of symmetry reduction for semidefinite programming (see e.g.\ \cite{GaPa}), one can simplify the
dual  problem (\ref{dual GW}) for the graphs $G_n$ defined in Section~\ref{sec:maxcutformulation}, by using the dihedral automorphism group of $G_n$.
We state the final expression as the following lemma.

\begin{lemma}
\label{lemma:reformulation SDP}
Let $n>0$ be an odd integer and $d = \lfloor n/2 \rfloor$. One has
\[
\mathcal{GW}(G_n) = \min_{{y} \in \mathbb{R}^{d-1}} \left\{n\sum_{i=2}^d y_i
\; \left| \;
 \mbox{\rm Diag}\left({y} - \frac{1}{4} {val}\right) + \Lambda^{(m)} \succeq 0 \; (0\le m \le d)\right.\right\},
\]
where
\begin{eqnarray}
val_i & = & i(i-1) + 2(i-1)(d-i), \quad 2\le i \le d,  \nonumber \\ 
\Lambda^{(m)}_{ij}  &=&  \frac{1}{4}\left[\sum_{k=1}^{i-1} e^{\frac{-2\pi mk\sqrt{-1}}{n}} +  \sum_{k=n-j+1}^{n-j+i-1} e^{\frac{-2\pi mk\sqrt{-1}}{n}}  \right], \;\;\; {2 \le i\le j \le d}, \label{GWconstraints} \\
\Lambda^{(m)} &= &{\Lambda^{(m)}}^*  \in \mathbb{C}^{d-1 \times d-1}. \nonumber 
\end{eqnarray}
\end{lemma}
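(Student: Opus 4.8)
The plan is to obtain the stated formula by symmetry reduction of the dual program~\reff{dual GW} for $G=G_n$ with respect to the dihedral automorphism group $D_n\le\mathrm{Aut}(G_n)$ acting on symmetric matrices by simultaneous row and column permutations; this is the standard recipe for invariant semidefinite programs (cf.~\cite{GaPa}), so the real content lies in making the combinatorics of $G_n$ explicit. First I would reduce the number of variables. The data of~\reff{dual GW} for $G_n$ is $D_n$-invariant: $\vector{L}$ is the Laplacian of $G_n$, hence $P_g\vector{L}P_g^{\top}=\vector{L}$ for all $g\in D_n$ (with $P_g$ the permutation matrix of $g$), while the objective $\sum_i w_i$ is unchanged under permutations of $\vector{w}$. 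The dual optimum is attained (Slater holds for the primal~\reff{eq:gw}, e.g.\ at $\vector{X}=I$), and averaging an optimal $\vector{w}$ over $D_n$ gives a $D_n$-invariant optimum, which is constant on each vertex orbit of $G_n$, hence determined by scalars $y_2,\dots,y_d$ with $y_i$ the common value on the orbit of chords of cyclic distance $i$. Since $n$ is odd, $2i\not\equiv 0\pmod n$ for $2\le i\le d$, so each such orbit has exactly $n$ elements; therefore $\sum_i w_i=n\sum_{i=2}^{d}y_i$, the objective in the lemma.

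Next I would block-diagonalize the matrix inequality. Identify $V_n$ with $\{2,\dots,d\}\times\mathbb{Z}_n$ by sending $(i,k)$ to the chord $\{v_k,v_{k+i}\}$; this is a bijection precisely because $n$ is odd. Under this identification the cyclic group $C_n\le D_n$ acts by translation in the second coordinate, so every $C_n$-invariant matrix is block-circulant with $(d-1)\times(d-1)$ blocks, and is block-diagonalized by the unitary block Fourier transform $\vector{U}=F_n\otimes I_{d-1}$ into the matrices $\widehat{M}^{(m)}:=\sum_{t\in\mathbb{Z}_n}M_{(\cdot,0),(\cdot,t)}\,e^{-2\pi\sqrt{-1}\,mt/n}$ for $m\in\mathbb{Z}_n$ (the sign of the exponent is immaterial below). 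Write $\vector{L}=\Diag(val)-\vector{A}$ with $\vector{A}$ the adjacency matrix of $G_n$. For a $D_n$-invariant $\vector{w}$ with orbit values $y$, the matrix $M:=\Diag(\vector{w})-\tfrac14\vector{L}=\Diag(y-\tfrac14\,val)+\tfrac14\vector{A}$ is $C_n$-invariant, so $M\succeq 0$ is equivalent to $\widehat{M}^{(m)}\succeq 0$ for all $m\in\mathbb{Z}_n$. The diagonal summand, being a diagonal invariant matrix, contributes the block $\Diag(y-\tfrac14\,val)$ for every $m$; thus it remains to identify $\tfrac14\,\widehat{\vector{A}}^{(m)}$ with $\Lambda^{(m)}$ and then to cut the range of $m$ down to $0\le m\le d$.

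The crux is the crossing count. Fix $2\le i\le j\le d$: I need the set of $t\in\mathbb{Z}_n$ for which the chord $\{v_0,v_i\}$ crosses $\{v_t,v_{t+j}\}$. The chord $\{v_0,v_i\}$ splits the remaining vertices into the arcs $A_1=\{v_1,\dots,v_{i-1}\}$ and $A_2=\{v_{i+1},\dots,v_{n-1}\}$, and the two chords cross iff exactly one endpoint of $\{v_t,v_{t+j}\}$ lies in $A_1$. Handling the two cases ($v_t\in A_1$ or $v_{t+j}\in A_1$), and using $i\le j$ to keep the second index set from wrapping around and $i+j\le n-1$ to keep the two index sets disjoint and away from the endpoints of $\{v_0,v_i\}$, one finds that crossing occurs precisely for $t\in\{1,\dots,i-1\}\cup\{n-j+1,\dots,n-j+i-1\}$; summing $e^{-2\pi\sqrt{-1}\,mt/n}$ over this set yields exactly $4\,\Lambda^{(m)}_{ij}$ as in~\reff{GWconstraints}, and Hermiticity $\Lambda^{(m)}=(\Lambda^{(m)})^{*}$ reflects the symmetry of $\vector{A}$. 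Finally, because $\vector{A}$ is real, $\overline{\widehat{\vector{A}}^{(m)}}=\widehat{\vector{A}}^{(n-m)}$, and a Hermitian matrix is positive semidefinite iff its conjugate is; since $n$ is odd, $\{0,1,\dots,d\}$ is a complete set of representatives for the pairs $\{m,n-m\}\subseteq\mathbb{Z}_n$, so the blocks with $d<m\le n-1$ are redundant. Combining the three steps gives the asserted reformulation.

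I expect the genuinely delicate point to be the crossing count in the last step: pinning down the exact index ranges, verifying that the two index sets stay disjoint and avoid the shared-endpoint values of $t$, and treating the boundary cases $i=2$ and $i=j$ — this is the one place where the hypotheses $2\le i\le j\le d$ are genuinely used. The variable reduction and the Fourier block-diagonalization are routine invariant-SDP bookkeeping; the only care needed there is to fix the Fourier convention consistently so that positive semidefiniteness transfers block by block.
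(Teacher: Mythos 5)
Your proposal is correct and follows essentially the same route as the paper: average the dual variables over the automorphism group to reduce to one scalar per cyclic-distance orbit, exploit the circulant structure coming from the $\mathbb{Z}_n$-action to block-diagonalize the resulting linear matrix inequality via the discrete Fourier transform, identify the blocks through the explicit crossing count (your index sets $\{1,\dots,i-1\}\cup\{n-j+1,\dots,n-j+i-1\}$ agree with the paper's circulant first row $[0\;\mathbf{1}_{i-1}^T\;\mathbf{0}_{n-i-j+1}^T\;\mathbf{1}_{i-1}^T\;\mathbf{0}_{j-i}^T]$), and discard the conjugate-redundant blocks to leave $0\le m\le d$. The only differences are presentational — you organize the matrix as block-circulant with $(d-1)\times(d-1)$ blocks rather than as a $(d-1)\times(d-1)$ array of $n\times n$ circulants, and you spell out the conjugation argument that the paper compresses into ``only $d+1$ of these blocks are distinct.''
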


For the proof, we recall that the {\em Kronecker product} $\vector{A} \otimes \vector{B}$ of matrices $\vector{A} \in
\mathbb{R}^{p \times q}$ and $\vector{B}\in \mathbb{R}^{r\times s}$ is
defined as the $pr \times qs$ matrix composed of $pq$ blocks of size
$r\times s$, with block $ij$ given by $a_{ij}\vector{B}$ where $1 \le i \le p$ and $1 \le j \le q$.

\proof
We first label the vertices $G_n$ as follows.
Consider the cycle $C_n$ with vertices numbered $\{0,1,\ldots,n-1\}$ in the usual way.
The vertices of $G_n$ that correspond to chords connecting points at cyclic distance $i$ are now
given consecutive labels $(0,i), (1,i+1), \ldots (n-1,i-1)$.
Thus the adjacency matrix of $G_n$ is partitioned into a block structure, where
 each row of blocks is indexed by a cyclic distance $i \in \{2,\ldots,d\}$,
 and each block has size $n\times n$.

Moreover, block $(i,j)$ ($i,j \in \{2,\ldots,d\}, \; i \le j$) is given by the $n \times n$ circulant matrix with first row
\[
[0 \; \mathbf{1}_{i-1}^T \; \mathbf{0}_{n-i-j+1}^T \; \mathbf{1}_{i-1}^T \; \mathbf{0}_{j-i}^T],
\]
where $\mathbf{1}_k$ and $\mathbf{0}_k$ denote the all-ones
and all-zeroes vectors in $\mathbb{R}^k$, respectively.

The eigenvalues of this block are
\begin{equation}
\label{eigs}
\lambda_m = \sum_{k=1}^{i-1} e^{\frac{-2\pi mk\sqrt{-1}}{n}} +  \sum_{k=n-j+1}^{n-j+i-1} e^{\frac{-2\pi mk\sqrt{-1}}{n}} \;\;\;\;\; (0 \le m \le n-1);
\end{equation}
see e.g.\ \cite{circulant matrices}.

Now let an optimal solution $\vector{w}$ of the semidefinite program (\ref{dual GW}) be given for $G = G_n$.
If we project the matrix
\[
\Diag(\vector{w}) + \frac{1}{4}\vector{L}
\]
onto the centralizer ring of $\mbox{Aut$(G_n)$}$, then we again obtain an optimal solution.
Indeed, this projection simply averages the components of $w$  over the $d-1$ orbits of  $\mbox{Aut$(G_n)$}$.
Moreover, the projection is also a symmetric positive semidefinite matrix, since any projection
of a Hermitian  positive semidefinite matrix onto a matrix $*$-algebra is again positive semidefinite (see e.g.\ \cite{towers of algebras}).

Denoting the average of the $w$ components in orbit $i$ by $y_i$,
we obtain an optimal solution of the form $$\mathcal{GW}(G_n) = \min_{\vector{y} \in \mathbb{R}^{d-1}} n\sum_{i=2}^d y_i$$
such that
\begin{equation}
\label{lmi}
\sum_{i=2}^d y_i\left(\vector{e}_{i-1}\vector{e}_{i-1}^T\right)\otimes \vector{I}_n - \frac{1}{4}\vector{L} \succeq 0,
\end{equation}
where $\vector{e}_i$ denotes
the $i$-th standard unit vector in $\mathbb{R}^{d-1}$, and $\vector{I}_n$ denotes the identity matrix of order $n$.

Let $\vector{Q}$ denote the (unitary) discrete Fourier transform matrix of order $n$.
Condition (\ref{lmi}) is equivalent to
\begin{equation}
\label{lmi2}
(\vector{I}_n \otimes \vector{Q}) \left(\sum_{i=2}^d y_i\left(\vector{e}_{i-1}\vector{e}_{i-1}^T\right)\otimes \vector{I}_n - \frac{1}{4}\vector{L}\right)(\vector{I}_n \otimes \vector{Q})^* \succeq 0.
\end{equation}
Since the unitary transform involving $\vector{Q}$ diagonalizes any circulant matrix (see e.g.\ \cite{circulant matrices}),
the matrix
$(\vector{I}_n \otimes \vector{Q})\vector{L}(\vector{I}_n \times \vector{Q})^*$ becomes a block matrix where each $n\times n$ block is diagonal, with diagonal entries of block $(i,j)$ given
by the eigenvalues in (\ref{eigs}).

Finally, the rows and columns of the left hand side of (\ref{lmi2}) may now be re-ordered to form a block diagonal
matrix with $n\times n$ diagonal blocks given by the {right} hand side of (\ref{GWconstraints}) (only $d+1$ of these blocks are distinct).
This completes the proof. \qed

A few remarks on the semidefinite programming reformulation in Lemma \ref{lemma:reformulation SDP}:
\begin{itemize}
\item
The constraints involve Hermitian (complex) linear matrix inequalities, as opposed to the
real symmetric linear matrix inequalities in (\ref{dual GW}).
\item
The reduced problem has $d+1$ linear matrix inequalities involving $(d-1)\times (d-1)$ matrices. By comparison,
the original problem had one linear matrix inequality involving $({n \choose 2}-n)\times ({n \choose 2} - n)$ matrices.
As a result, the reformulation of $\mathcal{GW}(G_n)$ may be solved for much larger values of $n$ than
the original formulation (\ref{dual GW}) (see next section).
\item
{
Although we have only done the symmetry reduction of problem (\ref{dual GW}) for $G_n$ with $n$ odd, the
case for even $n$ is similar, but omitted, since we will not use it later.
\item
Any feasible point $y \in \mathbb{R}^{d-1}$ of the reduced problem in Lemma \ref{lemma:reformulation SDP}
provides a certificate of an upper bound on $\mathcal{GW}(G_n)$,
and consequently a certificate of a lower bound on $\Crt(K_n)$, since $\Crt(K_n) \ge {n\choose 4} - \mathcal{GW}(G_n)$.
}
\end{itemize}

\section{Numerical computations: proof of Theorem~\ref{thm:main2}}\label{sec:numerical}

Theorem~\ref{thm:main2}~(A)
follows by an exact computation of the related maxcut problem of $G_n$
for certain values of $n$, while Theorem~\ref{thm:main2} (B) follows
by a calculation of $\mathcal{GW}(G_{899})$ and a standard counting argument.

\subsection{Proof of (A)}

First we observe that if $n < 5$ then $Z(n)=0$, and the assertion $\Crt(K_n) = Z(n)$ is
easily verified.

We computed the exact value $\mbox{maxcut}(G_{n})$ for $n=5, 7, 9, 11, 13, 15, 17,
20$,  and $24$,
using the solver {\tt BiqMac} \cite{BiqMac},
available from \url{http://biqmac.uni-klu.ac.at/}.
Computation was done on a quad-core 2.0 GHz Intel PC with 10 GB of RAM
memory, running Linux.
We used a cut-off time of $60$ hours for the computation for each value of $n$. As a consequence, the
{\tt BiqMac} solver failed to terminate successfully in a few cases, namely $n =19,21,22$, and $23$.

 The results are presented in the second column
of Table~\ref{tab:table1}. The exact value of $\Crt(K_n)$ (fourth column) follows from the second
and third columns (using Lemma~\ref{lem:reform}). The fifth column is
given for reference, to verify that $\Crt(K_n) = Z(n)$ for all these
values of $n$.  Thus (A) follows for $n=5,7,9,11,13,15,17,20,$ and
$24$.
{
The last two columns show the CPU time required, and the number of nodes
evaluated in the branch an bound tree by the solver  {\tt BiqMac}.
}
Finally, an elementary, well-known counting argument shows that if
$\Crt(K_{2m+1}) = Z(2m+1)$ for some positive integer $m$, then
$\Crt(K_{2m+2}) = Z(2m+2)$. This proves (A) for the remaining cases
$n=6,8,10,12,14,16$, and $18$.

\renewcommand{\arraystretch}{1.5}
\begin{table}
\begin{center}
\begin{tabular}{|c|c|c|c|c|c|c|}\hline \label{tab:table1}
$n$ & $\mbox{maxcut}(G_{n})$ & $|E_n|= \binom{n}{4}$ & $\Crt(K_n)$ &
  $Z(n) $ & CPU time (s) & Branch \& bound nodes\\ \hline
$5$ & 4 & 5 & 1 & 1 & 0.001 & 1\\ \hline
$7$ & 26 & 35 & 9 & 9 & 0.01 & 1 \\ \hline
$9$ & 90 & 126 & 36 & 36 & 0.22 & 3\\ \hline
$11$ & 230 & 330 & 100 & 100  & 4.01 & 17 \\ \hline
$13$ & 490 & 715 & 225 & 225 & 73.27 & 151\\ \hline
$15$ & 924 & 1,365 & 441 & 441 & 906.61 & 841 \\ \hline
$17$ & 1,596 & 2,380 & 784 & 784 & 15,542 & 6,837 \\ \hline
$20$ & 3,225 & 4,845 & 1,620 & 1,620 & 58,784 & 9,479\\ \hline
$24$ & 6,996 & 10,626 & 3,630 & 3,630 & 5,616 & 65\\ \hline
\end{tabular}
\caption{The second column gives the exact values of
  $\mbox{maxcut}(G_n)$ that we computed. The fourth column gives the
  corresponding exact values of $\Crt(K_n)$ (using that $\Crt(K_{n}) = |E_n|
  - \mbox{\rm maxcut}(G_n)$). For all these values of $n$,  the conjecture
  $\Crt(K_n) = Z(n)$ is verified.
 }
\end{center}
\end{table}

\subsection{Proof of (B)}


The first ingredient in the proof of (B) is
 a lower bound for $\Crt(K_{899})$. We obtained this bound via the approximate calculation of
 $\mathcal{GW}(G_{899})$, which we achieved by using the semidefinite programming reformulation
 in Lemma~\ref{lemma:reformulation SDP}.
 Computation was done on a Dell Precision T7500 workstation with 92GB of RAM memory, using the semidefinite
 programming solver SDPT3~\cite{SDPT3-ref1,SDPT3-ref2} under Matlab 7 together with the Matlab package YALMIP~\cite{YALMIP}.
 The total running time was $12,602$ seconds.
 SDPT3 was chosen since it  can deal
 with Hermitian matrix variables.
 We obtained $\mathcal{GW}(G_{899})
\le 1.76537474 \times 10^{10}$.
Using
 Lemma~\ref{lem:reform} and (\ref{eq:gw}), it follows immediately that
 \begin{equation}\label{eq:lowbo}
 \Crt(K_{899}) \ge 9,381,181,976.
 \end{equation}

The second ingredient to prove (B) is to establish a lower bound on the
asymptotic ratio $\lim_{n\to\infty}\Crt(K_n)/{Z(n)}$ that can be
guaranteed from a lower bound on $\Crt(K_m)$ for some $m > 3$.

\begin{claim}\label{cla:claimA}
For any  integer $m > 3$,
\[
\lim_{n\to\infty}
\frac{\Crt(K_{n})}{Z(n)} \ge
\frac{64}{m(m-1)(m-2)(m-3)}\ \Crt(K_{m}).
\]
\end{claim}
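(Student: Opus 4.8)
The plan is to run the classical ``averaging over subgraphs'' counting argument for crossing numbers, checking that it transfers verbatim to $2$-page drawings.

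First I would fix an \emph{optimal} $2$-page drawing $D$ of $K_n$, so that $D$ realizes exactly $\Crt(K_n)$ crossings. The structural fact underlying everything is that if $S$ is any set of $m$ of the $n$ vertices, then the sub-drawing $D[S]$ obtained by deleting all vertices outside $S$ (together with the edges incident to them) is again a $2$-page drawing, now of $K_m$: in the spine model one simply keeps the induced left-to-right order of the vertices of $S$ on the spine and the page assignment of each surviving edge, and deleting edges cannot create crossings. Hence $D[S]$ has at least $\Crt(K_m)$ crossings.

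Next I would set up a double count. In any $2$-page drawing, two edges lying in different pages can meet only on the spine, i.e.\ at a common vertex, and two edges sharing a vertex and lying in the same half-plane do not cross; hence every crossing of $D$ is determined by a set of \emph{four distinct} vertices. Such a crossing appears in $D[S]$ exactly when all four of its vertices belong to $S$, and a fixed $4$-set is contained in precisely $\binom{n-4}{m-4}$ of the $\binom{n}{m}$ sets $S$. Summing the number of crossings of $D[S]$ over all $S$ therefore equals $\binom{n-4}{m-4}\,\Crt(K_n)$, while each of the $\binom{n}{m}$ terms is at least $\Crt(K_m)$; this gives
\[
\binom{n}{m}\,\Crt(K_m)\ \le\ \binom{n-4}{m-4}\,\Crt(K_n),
\]
equivalently $\Crt(K_n)\ \ge\ \frac{n(n-1)(n-2)(n-3)}{m(m-1)(m-2)(m-3)}\,\Crt(K_m)$.

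Finally I would divide by $Z(n)$ and let $n\to\infty$. Since $Z(n)=\tfrac14\floor{n/2}\floor{(n-1)/2}\floor{(n-2)/2}\floor{(n-3)/2}$ is asymptotic to $n^4/64$, we have $n(n-1)(n-2)(n-3)/Z(n)\to 64$, and (using that $\lim_{n\to\infty}\Crt(K_n)/Z(n)$ exists, e.g.\ because $\Crt(K_n)/\binom n4$ is non-decreasing in $n$ by the displayed inequality with $m=n-1$ and is bounded above since $\Crt(K_n)\le Z(n)$) the bound above yields the claimed inequality. I do not expect any genuine obstacle here; the only points needing (routine) care are the two structural facts invoked above — that a vertex-induced sub-drawing of a $2$-page drawing of $K_n$ is a $2$-page drawing of $K_m$, and that crossing edges have four distinct endpoints — both of which are transparent in the spine (or circular) model.
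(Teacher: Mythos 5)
Your proposal is correct and follows essentially the same averaging/double-counting argument as the paper: induced sub-drawings on $m$-sets are $2$-page drawings of $K_m$, each crossing (having four distinct endpoints) lies in $\binom{n-4}{m-4}$ of the $\binom{n}{m}$ sets, giving $\Crt(K_n)\ge \frac{n(n-1)(n-2)(n-3)}{m(m-1)(m-2)(m-3)}\Crt(K_m)$, after which one divides by $Z(n)\sim n^4/64$. Your additional remark justifying that the limit exists (via monotonicity of $\Crt(K_n)/\binom{n}{4}$) is a point the paper glosses over, but it does not change the approach.
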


\begin{proof}
Let $m,n$ be  integers with $3 < m < n$.
Consider a $2$-page drawing $D$ of $K_n$ with $\Crt(K_n)$ edge crossings.
Let $\mathcal{G}$ denote the set of  subgraphs of $K_n$ that are isomorphic to $K_m$, i.e.\ $|\mathcal{G}| = {n \choose m}$.
Any two disjoint edges in $K_n$ occur in
${n-4 \choose m-4}$ of the graphs in $\mathcal{G}$.
Thus, every crossing in $D$ appears in the induced drawings of
${n-4 \choose m-4}$ graphs in $\mathcal{G}$. Consequently,
\begin{equation*}
\Crt({K_n})  \ge  \frac{\Crt(K_m){n \choose m}}{{n-4 \choose m-4}}
          = \frac{\Crt(K_{m})n(n-1)(n-2)(n-3)}{m(m-1)(m-2)(m-3)}.
\end{equation*}

The claim follows immediately from this inequality
and the definition of $Z(n)$.
\end{proof}

It only remains to observe that (B) is an immediate
consequence of (\ref{eq:lowbo}) and Claim~\ref{cla:claimA}.

\section{A quadratic programming lower bound for $\Crt({K_{m,n}})$}\label{sec:kmnone}

Throughout this section,
assume that $m$ is fixed, and
consider $2$-page drawings of $K_{m,n}$, where $n$ is any positive
integer. Thus, all vertices lie
on the $x$-axis, and each edge is contained either in the upper or
in the lower half-plane. We assume, without any loss of generality,
that the $m$ degree-$n$ {\em blue} vertices $b_1, b_2, \ldots, b_m$
appear on the $x$-axis in this order, from left to right.  The $n$ degree-$m$
vertices are {\em red}. The {\em star} of a red vertex $r$ (which we
shall denote $\star{r}$)
is the subgraph induced by $r$ and its incident edges. Thus, for every
red vertex $r$, $\star{r}$ is isomorphic to $K_{m,1}$.

\subsection{The {type} of a red vertex}

In our quest for lower bounding the number of crossings in any
$2$-page drawing $\dd$ of $K_{m,n}$, the strategy is to consider any two red vertices $r,r'$, and find a
lower bound for the number $\numcr_{{}_\dd}(\star{r},\star{r'})$ of
crossings in $\dd$ that
involve one edge in $\star{r}$ and one edge in $\star{r'}$. The bound
we establish is in terms of the {\em types} of $r$ and
$r'$. The type (formally defined shortly) of a red vertex is determined by
its position relative to the blue vertices, and by which edges
incident with it lie on each half-plane.

We start by noting that we may focus our interest in drawings in which no red vertex lies
to the left of $b_1$. Indeed, if the leftmost red vertex
lies to the left of $b_1$ (and so it is the leftmost vertex overall),
it is easy to see that it may be moved so
that it becomes the rightmost (overall) vertex, without increasing the
number of crossings. By repeating this procedure we get a drawing with
the same number of crossings, and with no red vertex to the left of
$b_1$. Thus there is no loss of generality in dealing only  with drawings that satisfy this property, and
it follows that each red vertex $r$ has a {\em
  position} $p(r)$ relative to the blue points: $p(r)$ is the largest
$j\in\{1,2,\ldots,m\}$ such that $r$ is to the right of $b_j$.

Also, to each red vertex $r$ we can naturally assign a partition
$\{U(r),L(r)\}$ of $\{1,2,\ldots,m\}$, the {\em distribution} of $r$,
defined by the rule that $j\in\{1,2,\ldots,m\}$ is in $U(r)$
(respectively, $L(r)$) if the
edge $rb_j$ lies in the upper (respectively, lower)
half-plane.
We call the triple $(p(r),
{U}(r),L(r))$ the {\em type} of $r$, and denote it by $\type{r}$.
Since $p(r)$ can be any integer in $\{1,2,\ldots,m\}$, and $U(r)$ any
subset of ${\{1,2,\ldots,m\}}$ (and $L(r)=\{1,2,\ldots,m\}\setminus
U(r)$ is determined by $U(r)$),
it follows that there are $m2^m$ possible types for a red vertex.  We
use Types$(m)$ to denote the collection of all $m2^m$ possible
types.

\subsection{Guaranteeing crossings between red stars using types}

The motivation for introducing the concept of type is that knowing the types of
two red vertices $r$ and $r'$ in a drawing $\dd$ of $K_{m,n}$ yields a lower bound on
$\numcr_{{}_\dd}(\star{r},\star{r'})$.
We illustrate this with an example. Suppose that $m=5$, and that
$\type{r} = (2,\{1,2,3,5\},\{4\})$ and $\type{r'} = (3,\{1,3,4,5\},\{2\})$. The
situation is thus as illustrated in Figure~\ref{fig:types}.

\begin{figure}[ht]
\centering
\resizebox{12cm}{!}{\input{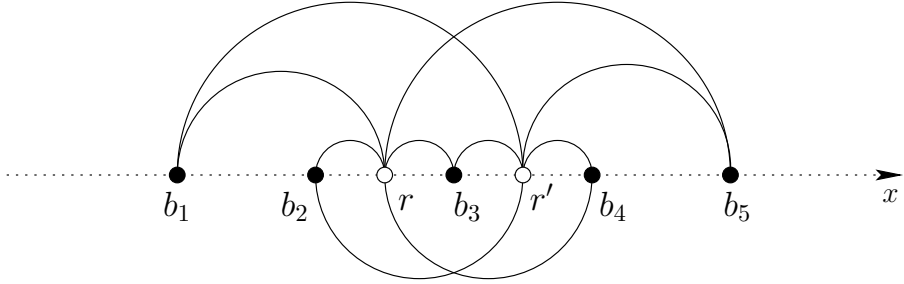}}
\caption{The types of the red vertices $r$ and $r'$ are
$(2,\{1,2,3,5\},\{4\})$ and $\type{r'} = (3,\{1,3,4,5\},\{2\})$, respectively. Thus,
  $r$ is in position $2$ (that is, between $b_2$ and
  $b_3$), and  the edges joining $r$ to $b_1, b_2, b_3$ and $b_5$ are
  in the upper half-plane and the edge joining $r$ to $b_4$ is in the
  lower half-plane. Both
  crossings in this drawing can be easily predicted from
  $\type{r}$ and $\type{r'}$.}
\label{fig:types}
\end{figure}

Both crossings between $\star{r}$ and $\star{r'}$ in this example are
easily detected from $\type{r}$ and $\type{r'}$. Indeed, since $b_1,
r, r', b_5$ occur in this order from left to right (this follows since
$r$ and $r'$ are in positions $2$ and $3$, respectively), and $b_1 r'$
and $r b_5$ are both on the upper half-plane (this follows since
$1\in U(r')$ and $5\in U(r)$), it follows that $b_1 r'$
and $r b_5$ must cross.  We remark that the key pieces of information
are that (i) the endpoints $b_1, r, r', b_5$ of $b_1 r'$ and $r b_5$ {\em alternate} on
the $x$-axis (that is, they are all distinct and occur in the $x$-axis
so that the ends of one edge are in first and third place and the ends
of the other edge are in second and fourth place); and (ii) both edges are drawn on the same half-plane.

Using this simple criterion (if two edges are on the same half-plane
and their endpoints alternate, then they must cross each other), given two red points $r,r'$ in a
drawing $\dd$ of $K_{m,n}$, it is easy to derive a lower bound for
$\numcr_{{}_\dd}(\star{r},\star{r'})$ in terms of $\type{r}$ and
$\type{r'}$.  This bound (Proposition~\ref{pro:startype} below) is
given in terms of a quantity we now proceed to define.

First, for $\sigma=(p,U,L)$ and $\tau=(p',U',L')\in \hbox{\rm Types}(m)$, we let
\begin{align*}\label{eq:forpos1}
[\sigma,\tau] :=
&\biggl|
\biggl\{
(i,j) \ \bigl| \ \biggl(
\bigl(i \in U\ \hbox{\rm and } j\in U'\bigr)\ \hbox{\rm or }
\bigl(i \in L\ \hbox{\rm and } j\in L'\bigr)
\biggr)\
\hbox{\rm and }\nonumber \\
&\biggl(
\bigl(
i < j \le p
\bigl)
\
\hbox{\rm or }
\bigl(
j \le p \ \text{\rm and }
p' < i
\bigl)
\
\hbox{\rm or }
\bigl(
i < j \ \text{\rm and }
p' < i
\bigl)
\
\hbox{\rm or }
\bigl(
p < j < i \le p'
\bigl)
\
\biggr)
\biggr\}
\biggr|,
\nonumber
 \end{align*}
and
\begin{equation*}
{Q_{\sigma\tau}} :=
\begin{cases}
[\sigma,\tau], & \text{if $p < p'$,} \\
[\tau,\sigma], & \text{if $p > p'$,} \\
\min\bigl\{[\sigma,\tau],[\tau,\sigma] \bigr\}, \hbox{\hglue 0.5 cm}
& \text{if $p = p'$.}
\end{cases}
\end{equation*}

The nonnegative integers $Q_{\sigma\tau}$ can be naturally regarded as the entries
of a $m2^m \times m2^m$-matrix $\vector{Q}$ indexed (both by rows and
columns) by the elements of Types$(m)$.
It is easy to check that the matrix $\vector{Q}$ is symmetric, and its entries provide the
lower bounds we have been aiming for.

\begin{proposition}\label{pro:startype}
Let $\sigma,\tau \in \hbox{\rm Types}(m)$, and
suppose that $r_\sigma, r_\tau$ are red points in a drawing $\dd$ of
$K_{m,n}$, such that
$\type{r_\sigma} = \sigma$ and $\type{r_\tau} = \tau$.
Then
\begin{equation*}
\numcr_{{}_\dd}(\star{r_\sigma},\star{r_\tau}) \ge Q_{\sigma\tau}.
\end{equation*}
\end{proposition}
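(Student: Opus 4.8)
The plan is to extract from the definition of $[\sigma,\tau]$ a cleaner intermediate claim, to prove that claim by a short finite case analysis, and then to deduce $Q_{\sigma\tau}$ from it using the obvious symmetry $\numcr_{{}_\dd}(\star{r_\sigma},\star{r_\tau})=\numcr_{{}_\dd}(\star{r_\tau},\star{r_\sigma})$.

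The \emph{intermediate claim} is the following: if $r_\sigma,r_\tau$ are red points of a drawing $\dd$ of $K_{m,n}$ with $\type{r_\sigma}=\sigma=(p,U,L)$ and $\type{r_\tau}=\tau=(p',U',L')$, and if $r_\sigma$ lies to the left of $r_\tau$ on the spine, then $\numcr_{{}_\dd}(\star{r_\sigma},\star{r_\tau})\ge[\sigma,\tau]$. Granting the claim, the proposition follows immediately. If $p<p'$ then $r_\sigma$ is automatically to the left of $r_\tau$ (indeed $r_\sigma<b_{p+1}\le b_{p'}<r_\tau$), so $\numcr_{{}_\dd}(\star{r_\sigma},\star{r_\tau})\ge[\sigma,\tau]=Q_{\sigma\tau}$; the case $p>p'$ is the mirror image and gives $\numcr_{{}_\dd}(\star{r_\sigma},\star{r_\tau})\ge[\tau,\sigma]=Q_{\sigma\tau}$; and if $p=p'$, then applying the claim to whichever of $r_\sigma,r_\tau$ is leftmost yields $\numcr_{{}_\dd}(\star{r_\sigma},\star{r_\tau})\ge[\sigma,\tau]$ or $\numcr_{{}_\dd}(\star{r_\sigma},\star{r_\tau})\ge[\tau,\sigma]$, and hence $\numcr_{{}_\dd}(\star{r_\sigma},\star{r_\tau})\ge\min\{[\sigma,\tau],[\tau,\sigma]\}=Q_{\sigma\tau}$.

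To prove the intermediate claim, I would argue as follows. A crossing between $\star{r_\sigma}$ and $\star{r_\tau}$ involves an edge $r_\sigma b_i$ and an edge $r_\tau b_j$; if $i=j$ these edges share the endpoint $b_i$ and need not concern us, so fix $i\ne j$, for which the four endpoints $b_i,r_\sigma,r_\tau,b_j$ are pairwise distinct. By the criterion recalled before the statement, if (a) $i\in U,\,j\in U'$ or $i\in L,\,j\in L'$ (so $r_\sigma b_i$ and $r_\tau b_j$ lie in the same half-plane) and (b) exactly one of $r_\tau,b_j$ lies strictly between $b_i$ and $r_\sigma$ along the spine (so the four endpoints alternate), then $r_\sigma b_i$ and $r_\tau b_j$ must cross. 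Since $r_\sigma$ is to the left of $r_\tau$ we have $p\le p'$, so the spine order of $b_1,\ldots,b_m$ together with $r_\sigma$ and $r_\tau$ is determined, and condition (b) can be tested by splitting into the cases $i\le p$, $p<i\le p'$, $p'<i$, and, within each, according to where $b_j$ sits. A direct check then shows that, for $i\ne j$, condition (b) holds precisely when $i<j\le p$, or $j\le p$ and $p'<i$, or $i<j$ and $p'<i$, or $p<j<i\le p'$ --- which is exactly the disjunction in the definition of $[\sigma,\tau]$. Combining this with (a), the pairs $(i,j)$ forced to give a crossing are precisely those counted by $[\sigma,\tau]$; as distinct such pairs determine distinct, non-adjacent pairs of edges, at least $[\sigma,\tau]$ crossings occur between $\star{r_\sigma}$ and $\star{r_\tau}$ (we may assume throughout that $\dd$ is a good drawing, so that these crossings are pairwise distinct).

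The argument has no conceptual content, and the only real work is the bookkeeping in the case analysis that confirms condition (b) is captured by the four disjuncts. The delicate points are the boundary indices ($i=p$ and $i=p'$) and, above all, the range $p'<i$, in which $r_\tau$ itself lies strictly between $r_\sigma$ and $b_i$: there, choosing $j$ with $p<j<i$ forces a \emph{second} endpoint of $r_\tau b_j$ into that same interval, which destroys the would-be crossing. This is exactly why the disjunct $p<j<i\le p'$ carries the restriction $i\le p'$, while the two disjuncts that permit larger $i$ both require $p'<i$.
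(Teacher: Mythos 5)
Your proposal is correct and follows essentially the same route as the paper: you reduce to the claim that when $r_\sigma$ lies to the left of $r_\tau$ one has $\numcr_{{}_\dd}(\star{r_\sigma},\star{r_\tau})\ge[\sigma,\tau]$, verify that the four disjuncts in the definition of $[\sigma,\tau]$ characterize alternation of endpoints, and then split on $p<p'$, $p>p'$, $p=p'$ exactly as the paper does. The extra care you take with the boundary indices and the distinctness of the resulting crossings is just a more explicit rendering of the paper's ``it is easy to verify'' and its injection argument.
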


\begin{proof}
Suppose first that $r_\sigma$ occurs to the left of $r_\tau$. It is easy to verify that if $i,j$ are integers such that
either (i) $i<j\le p$; (ii) $j \le p$ and $p' < i$; or (iii) $i<j$ and
$p'<i$; or (iv) $p<j<i \le p'$, then the endpoints of $rb_i$ and
$r'b_j$ alternate. Therefore, if either $i\in U$ and $j\in U'$, or
$i\in L$ and $j\in L'$, then $rb_i$ and $r'b_j$ cross each
other. Therefore there is an injection from the set of all pairs $(i,j)$ of integers that satisfy the
condition in the definition of $[\sigma,\tau]$, to the set of
crossings that involve an edge in $\star{r_\sigma}$ and an edge in
$\star{r_\tau}$; that is,
$\numcr_{{}_\dd}(\star{r_\sigma},\star{r_\tau}) \ge [\sigma,\tau]$.

Similarly, if $r_\sigma$ occurs to the right of $r_\tau$, then
$\numcr_{{}_\dd}(\star{r_\sigma},\star{r_\tau}) \ge [\tau,\sigma]$.

Now if $p < p'$ (respectively, $p > p'$), then $r_\sigma$ necessarily occurs to
the left (respectively, to the right) of $r_\tau$, and so it follows
that $\numcr_{{}_\dd}(\star{r_\sigma},\star{r_\tau}) \ge [\sigma,\tau]
= Q_{\sigma\tau}$ (respectively, $\ge [\tau,\sigma] =
Q_{\sigma\tau}$), as required.
Finally, If $p=p'$, then $r_\sigma$ can be either to the right
or to the left of $r_\tau$.  In the first case,
$\numcr_{{}_\dd}(\star{r_\sigma},\star{r_\tau}) \ge [\sigma,\tau]$,
while in the second case
$\numcr_{{}_\dd}(\star{r_\sigma},\star{r_\tau}) \ge
[\tau,\sigma]$. Thus, in this case,
$\numcr_{{}_\dd}(\star{r_\sigma},\star{r_\tau}) \ge
\min\{[\sigma,\tau],[\tau,\sigma]\}=Q_{\sigma\tau}$, as required.
\end{proof}

\subsection{The quadratic program}

Consider now any fixed $2$-page drawing $\dd$ of $K_{m,n}$. For each type
$\sigma\in \hbox{\rm Types}(m)$, let $n_\sigma$ denote the number of red
vertices whose type in $\dd$ is $\sigma$,  let $p_\sigma:=n_\sigma/n$,
and let $\vector{p}$ be the vector
$(p_\sigma)_{\sigma\in\hbox{\rm\footnotesize Types}(m)}$. It follows
  immediately from Proposition~\ref{pro:startype} that the number
  $\Crt(\dd)$ of crossings in $\dd$ satisfies
\begin{align*}
\Crt(\dd) &\ge \frac{1}{2} \sum_{\stackrel{\sigma,\tau\in\hbox{\rm\footnotesize
    Types}(m)}{\sigma\neq \tau}} Q_{\sigma\tau} n_\sigma n_\tau
+ \sum_{\sigma\in\smtyp} Q_{\sigma\sigma} {n_\sigma\choose 2}\\
&= \frac{1}{2} \sum_{\sigma,\tau\in\smtyp} Q_{\sigma\tau} n_\sigma n_\tau - \frac{1}{2} \sum_{\sigma\in\smtyp} Q_{\sigma\sigma} n_\sigma\\
&= \frac{n^2}{2} \vector{p}^T \vector{Q} \vector{p} - \frac{n}{2} \sum_{\sigma\in\smtyp} Q_{\sigma\sigma} p_\sigma\\
&\ge \frac{n^2}{2} \vector{p}^T \vector{Q} \vector{p} - \frac{n}{2} \max_{\sigma\in\smtyp} Q_{\sigma\sigma}\\
&\ge \frac{n^2}{2} \vector{p}^T \vector{Q} \vector{p} - \frac{m(m-1) n}{4},
\end{align*}

\noindent where for the last inequality we use that
$\Sigma_{\sigma\in\hbox{\rm\tiny Types}(m)} p_\sigma = 1$ and that $Q_{\sigma\sigma}=[\sigma,\sigma] \le {m \choose 2}$.

%
%
%

The derived inequality holds for every $2$-page drawing $\dd$ of $K_{m,n}$,
and so in particular for a crossing-minimal drawing. Thus, if we let
\[
\Delta = \left\{\vector{x} = (x_1, x_2, \ldots, x_{m2^m})^T \in \mathbb{R}^{m2^m} \; \biggl| \; \sum_i x_i = 1, \; x_i \ge 0 \right\}
\]
denote the standard simplex in $\mathbb{R}^{m2^m}$, then we obtain

\begin{equation}\label{eq:genbound}
\Crt({K_{m,n}}) \ge \frac{n^2}{2} \biggl(\min_{\vector{x}\in \Delta }
\, \vector{x}^T \vector{Q} \vector{x}\biggr) - \frac{m(m-1) n}{4}.
\end{equation}

We may therefore obtain a lower bound on $\Crt({K_{m,n}})$ for some
fixed $m$ (we will be particularly interested in the case $m = 7$), by solving the
standard quadratic programming problem
\begin{equation}
\label{eq:QP}
\lb(m) = \min_{\vector{x} \in \Delta} \,\vector{x}^T\vector{Q}\vector{x}.
\end{equation}

The standard quadratic programming problem is NP-hard in general, and we will only compute a lower bound on the
minimum via semidefinite programming, as explained in the next section.

\section{A semidefinite programming lower bound on
  $\Crt({K_{m,n}})$}\label{sec:kmntwo}

The usual semidefinite programming relaxation of problem~(\ref{eq:QP}) takes the form
\begin{eqnarray}
\lb(m) &\ge & \min \bigl\{\trace(\vector{Q}\vector{X}) \; \bigl|
\; \trace(\vector{J}\vector{X}) = 1, \; \vector{X} \succeq \vector{0}, \;
\vector{X} \ge \vector{0}\bigr\} \nonumber \\
& = & \max \bigl\{ t \; \bigl| \;
\vector{Q} - t\vector{J} = \vector{S_1} + \vector{S_2}, \;
\vector{S_1} \succeq \vector{0}, \; \vector{S_2} \ge \vector{0}\bigr\}
\nonumber \\
&:= & {\text{\rm SDP}}_{bound}(m), \label{eq:spb}
\end{eqnarray}
where $\vector{J}$ is the all-ones matrix,  and $\vector{X} \ge \vector{0}$
means that $\vector{X}$ is entrywise nonnegative. We observe that the first equality is due to the duality theory of semidefinite programming.

Due to the special structure of $\vector{Q}$, we may again use symmetry reduction to
reduce the size of these problems.
To this end, for odd $m$, we may order the rows and columns of $\vector{Q}$ to obtain a block matrix consisting
of circulant blocks of order $2m$. (Thus there are $2^{m-1}$ rows/columns of blocks).
{
The ordering of rows works as follows: we first define a group action on the set Types$(m)$.
For ease of notation we now represent the elements of
Types$(m)$ as $(p,U)$, with $p \in \{0,\ldots,m-1\}$ and $U \subseteq \{0,\ldots,m-1\}$, i.e.\ we now number the $m$ vertices
from $0$ to $m-1$, and omit the set $L$ (which is redundant in the description since it is the complement of $U$).

The group in question is generated by the following two elements, a 'flip':
\[
g_1:(p,U) \mapsto (p,\{0,\ldots,m-1\}\setminus U),
\]
and a 'cyclic shift':
\[
g_2:(p,U) \mapsto (p+1 \mod m,\{u+1 \mod m \; | \; u \in U \}).
\]
Note that $g_1$ and $g_2$ commute and therefore generate an Abelian group of order $2m$.
If $m$ is odd, then $g := g_1 \circ g_2$ generates the entire group, i.e.\ in this case
we obtain the cyclic group of order $2m$. Indeed, the order of $g$ equals the least common multiple
of the orders of $g_1$ and $g_2$, namely $2m$ if $m$ is odd.

Also note that
\[
Q_{\sigma, \tau} = Q_{g_i(\sigma), g_i(\tau)} \quad \forall \sigma, \tau \in \mbox{Types}(m), \; i \in \{1,2\},
\]
i.e.\ the crossing number of a 2-page drawing does not change if we 'flip' the drawing along its spine, or, in the circular model, rotate the drawing.

Finally, we group together the $2m$ elements of Types$(m)$ that belong to a given orbit of the group, to
obtain $2m \times 2m$ circulant blocks.
}
In what follows, we denote the first row of the $2m \times 2m$ circulant block
$(i,j)$ by $q^{(i,j)} \in \mathbb{Z}^{2m}$.

\begin{lemma}
\label{lemma:reformulation SDP2}
{For odd $m$}, the semidefinite programming bound (\ref{eq:spb}) may be reformulated as
\[
\text{\rm SDP}_{bound}(m) = \max t
\]
subject to
{
\begin{eqnarray*}
q^{(i,j)}_k - t - x^{(i,j)}_k &\ge& 0, \quad  0\le k \le 2m-1, \; 1 \le i,j \le 2^{m-1}, \\
X^{(t)}_{ij} &=&  x^{(i,j)}_0 + \sum_{k=1}^{2m-1} x^{(i,j)}_k e^{-\pi \sqrt{-1} tk/m}, \;\;\; 1 \le i \le j \le 2^{m-1}, \; 0 \le t \le 2m-1,\\
X^{(t)} = ( X^{(t)})^*&\succeq& \vector{0}, \quad 0 \le t \le 2m-1,\\
x^{(i,i)}_k -x^{(i,i)}_{2m+1-k} &=&  0, \quad 1 \le k \le m-1, \; 1\le i \le 2^{m-1}, \\
x^{(i,j)} & \in & \mathbb{R}^{2m}, \quad 1 \le i,j \le 2^{m-1}.
\end{eqnarray*}
}
\end{lemma}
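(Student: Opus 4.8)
The plan is to mimic, almost verbatim, the symmetry-reduction argument already carried out in the proof of Lemma~\ref{lemma:reformulation SDP}, but now applied to the semidefinite program~(\ref{eq:spb}) instead of~(\ref{dual GW}), using the cyclic group of order $2m$ generated by $g = g_1\circ g_2$ in place of the dihedral group $D_n$. First I would take an optimal primal solution $\vector{X}$ of the first line of~(\ref{eq:spb}) (the minimization over $\trace(\vector{J}\vector{X})=1$, $\vector{X}\succeq 0$, $\vector{X}\ge 0$) and replace it by its average $\widehat{\vector{X}} = \frac{1}{2m}\sum_{\ell=0}^{2m-1} g^\ell \cdot \vector{X}$ over the group orbit. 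Because $Q_{\sigma\tau} = Q_{g(\sigma),g(\tau)}$ and $\vector{J}$ is fixed by every permutation, $\widehat{\vector{X}}$ has the same objective value $\trace(\vector{Q}\widehat{\vector{X}})$ and still satisfies the trace-normalization and the two cone constraints (a convex combination of positive semidefinite, entrywise-nonnegative, correctly normalized matrices is of the same kind); hence $\widehat{\vector{X}}$ is again optimal. The averaged matrix is constant on $g$-orbits, so after grouping the $2m$ types in each orbit into a consecutive block, $\widehat{\vector{X}}$ becomes a block matrix whose $(i,j)$ block is a $2m\times 2m$ circulant; write $x^{(i,j)}\in\mathbb{R}^{2m}$ for its first row (this is where the notation $q^{(i,j)}$ introduced just before the lemma gets its partner).

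Next I would pass to the dual/block-diagonal form. The matrix $\vector{Q}$, restricted to the same block ordering, also has circulant $2m\times 2m$ blocks with first rows $q^{(i,j)}$, by the invariance $Q_{\sigma\tau}=Q_{g(\sigma),g(\tau)}$ together with the commuting generators $g_1,g_2$ — so conjugating the whole thing by $\vector{I}_{2^{m-1}}\otimes \vector{F}$, where $\vector{F}$ is the order-$2m$ unitary DFT matrix, simultaneously diagonalizes every circulant block. After re-ordering rows and columns by the Fourier index, the single $\succeq 0$ constraint on $\widehat{\vector{X}}$ splits into $2m$ independent constraints $X^{(t)}\succeq 0$, $0\le t\le 2m-1$, where $X^{(t)}$ is the $2^{m-1}\times 2^{m-1}$ Hermitian matrix with entries $X^{(t)}_{ij} = \sum_{k=0}^{2m-1} x^{(i,j)}_k e^{-\pi\sqrt{-1}\,tk/m}$ (the $t$-th Fourier coefficient of the circulant block $(i,j)$). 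The entrywise-nonnegativity constraint $\widehat{\vector{X}}\ge 0$ is, after averaging, equivalent to requiring each entry of each circulant block to be nonnegative, i.e.\ $x^{(i,j)}_k\ge 0$ for all $k$; combined with the slack formulation $\vector{Q}-t\vector{J}=\vector{S}_1+\vector{S}_2$ of the dual this yields the displayed inequalities $q^{(i,j)}_k - t - x^{(i,j)}_k\ge 0$ (here $x^{(i,j)}$ plays the role of the first row of the circulant part of the psd slack $\vector{S}_1$, so that $q^{(i,j)}_k - t - x^{(i,j)}_k$ is the corresponding entry of the nonnegative slack $\vector{S}_2$, and $X^{(t)}$ is the Fourier transform of $\vector{S}_1$). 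Finally, the diagonal blocks $(i,i)$ of $\vector{S}_1$ must be \emph{symmetric} circulants (they come from a symmetric matrix and a symmetric group action), which forces $x^{(i,i)}_k = x^{(i,i)}_{2m-k}$; re-indexing gives the stated relation $x^{(i,i)}_k - x^{(i,i)}_{2m+1-k}=0$ for $1\le k\le m-1$ (I would double-check this index convention against the circulant-symmetry condition, since the off-by-one between $2m-k$ and $2m+1-k$ just reflects whether the first row is indexed $0,\ldots,2m-1$ or $1,\ldots,2m$).

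The main obstacle, as in the odd-$n$ case of Lemma~\ref{lemma:reformulation SDP}, is purely bookkeeping rather than conceptual: one must verify carefully that the chosen linear ordering of Types$(m)$ really does turn both $\vector{Q}$ and the averaged variable into genuinely \emph{circulant} (not merely block-structured) $2m\times 2m$ blocks, which hinges on the fact that for odd $m$ the single element $g=g_1\circ g_2$ has order exactly $2m=\mathrm{lcm}(2,m)$ and hence generates the whole group $\langle g_1,g_2\rangle$ acting freely on each orbit — this is exactly why the statement is restricted to odd $m$, and it is the one place where oddness is essential. I would also note, as a remark rather than part of the proof, that only $d+1$-type collapsing of identical blocks occurs here too (many of the $X^{(t)}$ coincide in pairs $t\leftrightarrow 2m-t$ by Hermitian conjugation), so the effective problem is roughly half the stated size; but since this only makes the computation cheaper and is not needed for correctness, I would leave it out of the formal statement. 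With these points checked, the equivalence of~(\ref{eq:spb}) with the displayed system follows exactly as in the proof of Lemma~\ref{lemma:reformulation SDP}, and the proof is complete.
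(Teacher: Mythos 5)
Your proposal is correct and is essentially the proof the paper has in mind: the paper omits it entirely, stating only that it is ``similar to that of Lemma~\ref{lemma:reformulation SDP}'', and your argument is exactly that symmetry reduction (averaging over the cyclic group of order $2m$ generated by $g=g_1\circ g_2$, which acts freely on Types$(m)$ precisely because $m$ is odd, followed by DFT block-diagonalization of the resulting circulant blocks), correctly identifying $x^{(i,j)}$ as the first row of the circulant blocks of the slack $\vector{S}_1$ in the dual form of~(\ref{eq:spb}). Your flagged index discrepancy ($x^{(i,i)}_k=x^{(i,i)}_{2m-k}$ versus the stated $2m+1-k$) is indeed just an off-by-one in the indexing convention of the lemma statement, not a gap in your argument.
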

\proof
The proof is similar to that of Lemma \ref{lemma:reformulation SDP} and is therefore omitted.\qed

A few remarks on the semidefinite programming reformulation in Lemma \ref{lemma:reformulation SDP2}:
\begin{itemize}
\item
As in  Lemma \ref{lemma:reformulation SDP}, the constraints involve Hermitian (complex) linear matrix inequalities.
\item
The reduced problem has $2m$ linear matrix inequalities involving $(2^{m-1})\times (2^{m-1})$ matrices. By comparison,
the original problem had one linear matrix inequality involving a $(m2^m)\times (m2^m)$ nonnegative matrix.
As a result, the reformulation in Lemma \ref{lemma:reformulation SDP2} may be solved for larger values of $m$ than
the original formulation (see next section).
\item
{
Similarly to  Lemma \ref{lemma:reformulation SDP}, every feasible point $x^{(i,j)}\in  \mathbb{R}^{2m}$ $( 1 \le i,j \le 2^{m-1})$
yields a certificate of lower bound on $\text{\rm SDP}_{bound}(m)$, and consequently a certificate of a lower
bound on $\Crt({K_{m,n}})$, by (\ref{eq:genbound}).
}
\end{itemize}

\section{Numerical computations: proof of Theorem~\ref{thm:main1}}\label{sec:numericalzar}


Using the reformulation in Lemma~\ref{lemma:reformulation SDP2}, we showed
numerically that $\text{\rm SDP}_{bound}(7) =
\frac{9}{2}$.
 Computation was done on a Dell Precision T7500 workstation with 92GB of RAM memory, using the semidefinite
 programming solver SDPT3~\cite{SDPT3-ref1,SDPT3-ref2} under Matlab 7 together with the Matlab package YALMIP~\cite{YALMIP}.
 The running time was $23,774$ seconds.
 SDPT3 was chosen since it  can deal
 with Hermitian matrix variables.

Using that $\text{\rm SDP}_{bound}(7) =9/2$,
it follows from (\ref{eq:genbound}), (\ref{eq:QP}), and (\ref{eq:spb})
that
\begin{equation}\label{eq:k7n}
\Crt(K_{7,n}) \ge ({9}/{4})n^2 - {(21/2)n}.
\end{equation}

We recall that $Z(7,n) = 9\lfloor{n/2}\rfloor
\lfloor{(n-1)/2}\rfloor = (9/4)n^2 + O(n)$, and that $\Crt(K_{7,n}) \le Z(7,n)$ (since there are $2$-page drawings of
$K_{7,n}$ with exactly $Z(7,n)$ crossings).  Using these observations
and (\ref{eq:k7n}), Theorem~\ref{thm:main1} follows for $m=7$.

Now an elementary counting argument shows
that
$\Crt(K_{8,n}) \ge 8\Crt(K_{7,n})/ 6$, and so using (\ref{eq:k7n}) and simplifying we obtain
$\Crt(K_{8,n}) \ge 3n^2 - 14n$.
Since $Z(8,n)=3n^2 + O(n)$, Theorem 1 follows for $m=8$.

\begin{section}{Concluding remarks}\label{sec:concludingremarks}




The Goemans-Williamson bound (Section~\ref{sec:GW}) empirically  yields
better lower bounds on $\Crt(K_n)$ as $n$ grows; see Figure
\ref{fig:dimret}.

\begin{figure}[h!]
\label{fig:dimret}
\begin{center}
\resizebox{10cm}{!}{\input{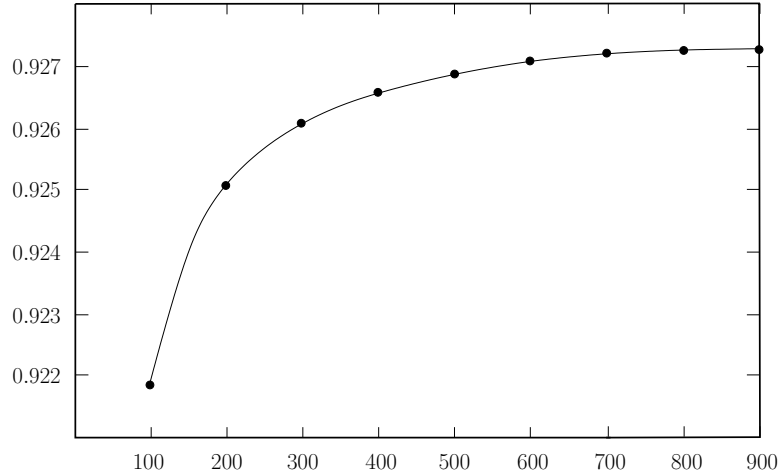}}
\end{center}
\caption{The ratio $\frac{{n \choose 4} - \mathcal{GW}(G_n)}{Z(n)}$
  for $n=99, 199, 299, 399, 499, 599, 699, 799$, and $899$.}
\end{figure}

Based on this empirical evidence, it seems reasonable to expect that
the constant $0.9253$ would be improved if $\mathcal{GW}(G_m)$ were
computed for larger values of $m$.
{
Having said that, the figure also shows a trend of diminishing returns --- by extrapolating the curve in the figure,
it seems that it may not be possible to improve the constant to more than $0.929$, say, through computation
of $\mathcal{GW}(G_m)$, if $m \le 2,000$.

{
Another possibility to improve the constant is to compute $\Crt(K_{m})$ for larger values of $m$ than $m =24$,
by solving the maximum cut problem in Lemma \ref{lem:reform}. If, for example, one could verify in this way that
$\Crt(K_{30}) = Z(30)$, then this would yield the constant $0.9297$, by Claim \ref{cla:claimA}.
}

Regarding the computational lower bound on $\Crt(K_{m,n})$:
It is interesting to note that the SDP bound $\text{\rm SDP}_{bound}(m)$ provided a tight asymptotic bound
on $\Crt(K_{m,n})$ for $m = 3,5$ and $7$.
A similar SDP bound used in \cite{dKMahPasRicSal} and \cite{DeKPasSch} did not provide a tight asymptotic bound on the usual crossing number
$\Cr(K_{m,n})$, not even for $m = 5$.
Our results therefore suggest that one may be able to prove computationally that
$\lim_{n\to\infty} \frac{\Crt(K_{m,n})}{Z(m,n)} = 1$ for (fixed) odd values of $m \ge 9$.
Having said that, for $m=9$, the resulting semidefinite program was too large for us to compute $\text{\rm SDP}_{bound}(9)$.
This problem therefore provides a good future challenge to the computational SDP community.

}

\end{section}

\paragraph{Acknowledgements.}
The authors are grateful to Gelasio Salazar for suggesting to work on these problems, and for 
providing many useful comments, suggestions, and references before deciding to withdraw from this project.
The authors would also like to thank Angelika Wiegele for making the source
code of her max-cut solver {\tt BiqMac} available to them, and Imrich Vrt'o for helpful comments.

 \end{document}